\documentclass[11pt]{amsart}
\usepackage[T1]{fontenc}
\usepackage{lmodern}
\usepackage{amsmath,amssymb,amsthm,mathtools}
\usepackage{booktabs,longtable,array}
\usepackage[margin=1in]{geometry}
\usepackage{microtype}
\usepackage[hidelinks]{hyperref}
\newtheorem{theorem}{Theorem}[section]
\newtheorem{proposition}[theorem]{Proposition}
\newtheorem{lemma}[theorem]{Lemma}

\theoremstyle{remark}

\newcommand{\R}{\mathbb R}

\newcommand{\E}{\mathbb E}

\newcommand{\KG}{K_G^{\R}}
\newcommand{\sgn}{\operatorname{sgn}}
\newcommand{\He}{\operatorname{He}}
\newcommand{\D}{\mathcal D}
\newcommand{\norm}[1]{\left\lVert#1\right\rVert}
\newcommand{\abs}[1]{\left|#1\right|}
\DeclareMathOperator{\erf}{erf}
\DeclareMathOperator{\erfc}{erfc}
\title[An upper bound for the real Grothendieck constant]{A computer-assisted upper bound of 1.7813\\for the real Grothendieck constant}
\author{Bo Peng}
\address{Institute of Mathematical Sciences, ShanghaiTech University}
\thanks{The discovery of the construction and verification software are
credited to GPT-6 Astra (OpenAI), used through Codex.}
\date{September 16, 2026}
\subjclass[2020]{46B20, 46B28, 65G20}
\keywords{Grothendieck inequality, Gaussian rounding, Hermite expansion,
computer-assisted proof, interval arithmetic}
\hypersetup{pdftitle={A computer-assisted upper bound of 1.7813 for the real Grothendieck constant},pdfauthor={Bo Peng}}
\begin{document}
\begin{abstract}
We give a computer-assisted proof that the universal real Grothendieck
constant satisfies $K_G^{\R}\le 1.7813$. The construction combines an explicit
odd Hermite threshold of degree $11$ with a signed correlation polynomial of
degree $51$. A sufficient inverse-majorant inequality is certified by
enclosing the scalar coefficient head through degree $301$ and bounding the
entire remaining tail using a weighted Gaussian trace estimate. The finite
integral is bounded on a complete interval partition; its spatial exterior
is controlled analytically. Exact parameters and source code that regenerate
all accepted numerical inputs accompany the paper. The bound improves both
the explicit bound $1.7818666069360661$ in the recent literature and the
subsequently reported, system-tested value $1.7813319810625639$.
\end{abstract}
\maketitle

\section{Introduction}
For a real $m\times n$ matrix $A=(a_{ij})$, write
\[
 B(A)=\max_{\epsilon_i,\delta_j\in\{-1,1\}}
       \abs{\sum_{i=1}^m\sum_{j=1}^n a_{ij}\epsilon_i\delta_j}.
\]
The universal real Grothendieck constant $\KG$ is the least $K$ such that
\begin{equation}\label{eq:grothendieck}
 \abs{\sum_{i=1}^m\sum_{j=1}^n a_{ij}\langle u_i,v_j\rangle}
 \le K B(A)
\end{equation}
for all positive integers $m,n,d$, all real matrices $A$, and all unit vectors
$u_1,\ldots,u_m,v_1,\ldots,v_n\in\R^d$. Its exact value remains unknown.
We prove the following bound.

\begin{theorem}\label{thm:main}
The universal real Grothendieck constant satisfies
\[
 \KG\le\frac{17813}{10000}=1.7813.
\]
\end{theorem}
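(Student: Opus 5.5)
The proof follows the Krivine-type rounding framework, in the generalized form of Braverman--Makarychev--Makarychev--Naor. We construct a randomized rounding scheme: given unit vectors $u_i,v_j$, we first apply a nonlinear map to new unit vectors $u_i',v_j'$ in a Hilbert space, and then round with odd threshold functions of Gaussian projections. The goal is to make the expected product of signs exactly equal to $c\langle u_i,v_j\rangle$. This yields $\abs{\sum a_{ij}\langle u_i,v_j\rangle}\le c^{-1}B(A)$, so it suffices to achieve $c^{-1}\le 1.7813$.

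\textbf{Step 1: the rounding correlation.} Fix an odd threshold $f=\sgn(h)$, where $h$ is an odd polynomial of degree $11$ in Hermite form. For a standard Gaussian vector $g$ and unit vectors $x,y$ with $\langle x,y\rangle=t$, we compute
\[
F(t)=\E\bigl[\sgn h(\langle g,x\rangle)\,\sgn h(\langle g,y\rangle)\bigr]=\sum_{k\text{ odd}}c_k^2 t^k.
\]
Here $c_k$ are the normalized Hermite coefficients of $\sgn h$, and the sum runs over odd $k$ only because $h$ is odd. Alongside this we fix a signed correlation polynomial $p$ of degree $51$, which plays the role of the one-sided corrections in Krivine's argument.

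\textbf{Step 2: inversion via a majorant.} We need to realize $\langle u_i',v_j'\rangle=F^{-1}(c\,\langle u_i,v_j\rangle)$. By the standard tensor-power construction, this is possible on $[-c,c]$ provided that the inverse series $F^{-1}(s)=\sum b_k s^k$ satisfies the majorant condition $\sum\abs{b_k}c^k\le 1$. To make the condition checkable, we work with a sufficient inequality adapted to the signed polynomial $p$: the vectors are built as direct sums carrying the positive and negative coefficient parts separately.

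\textbf{Step 3: certification.} This reduces everything to one scalar inequality in the explicit parameters. We handle it in three parts:
\begin{itemize}
\item The Hermite coefficients $c_k$ of $\sgn h$ are Gaussian integrals over the finitely many intervals determined by the real roots of $h$. We compute them via $\erf$ with interval arithmetic.
\item We enclose the coefficient head through degree $301$ rigorously.
\item We bound the tail $\sum_{k>301}c_k^2$ by Parseval minus the head, $1-\sum_{k\le 301}c_k^2$, together with a weighted Gaussian trace estimate of the form $\sum_k k^{s}c_k^2 r^{k}$. This controls the remaining contribution to the majorant sum.
\end{itemize}
Any finite integral in $t$ is bounded on a complete interval partition, and the region $\abs{x}$ large is controlled by Gaussian tail bounds $\erfc$.

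\textbf{Main obstacle.} The hardest step is the tail. The majorant sum involves the inverse-series coefficients, which depend on all $c_k$, so a crude tail bound loses more than the margin $1.7813319\ldots-1.7813$. The plan is to choose the weight in the trace estimate so that the geometric decay at radius $c$ dominates, and to push the head degree high enough (to $301$) that the residual falls well below the available slack.
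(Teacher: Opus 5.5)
There is a genuine gap, starting with Step~1. Your rounding is one-dimensional, $\sgn h(\langle g,x\rangle)$, so its correlation series $F(t)=\sum c_k^2t^k$ has nonnegative coefficients. The degree-$51$ polynomial $p$ never actually enters $F$ or its inverse: ``plays the role of the one-sided corrections'' and ``direct sums carrying positive and negative parts'' do not define a scheme. Nothing in the proposal explains how a one-dimensional sign pattern could beat Krivine. Indeed, $\sgn$ already maximizes $c_1^2=(\E[f(X)X])^2$, and the known improvements (BMMN's and this paper's) come from two-dimensional projections.

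The paper's construction is $f(w,x)=\sgn(w+P(x))$ on two \emph{independent} Gaussian channels. For a pair with inner product $t$, the channel correlations are $\rho(k(t))$ and $-k(t)$. Mehler's identity then gives expected product $\frac2\pi H(k(t))$ with $H(t)=\frac\pi2\sum_{a,b}A_{ab}^2\rho(t)^a(-t)^b$. The freely signed coefficients of $\rho$ come from the triangular recurrence \eqref{eq:search1}--\eqref{eq:searchn}, precisely so that $H$ is nearly linear: its head nonlinear mass is below $3\cdot10^{-8}$. That is why the crude criterion $\gamma+\sum_{n\ge3}|b_n|<b_1$ of Lemma~\ref{lem:inverse} suffices, with no inverse series computed at all. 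In your setting the same criterion gives only $c<2c_1^2-1\le 4/\pi-1$. You are therefore forced back to the full inverse majorant $\sum_n|[z^n]F^{-1}|c^n$, which depends on every $c_k$.

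That is where your tail step fails quantitatively. Since $\sgn h$ is a step function with jumps, $c_k^2\asymp k^{-3/2}$; for $h(x)=x$ one has $c_{2m+1}^2\sim\pi^{-3/2}m^{-3/2}$. So $1-\sum_{k\le301}c_k^2$ is of order $10^{-2}$ (about $0.03$ already for $\sgn$). By contrast, $1/1.7813$ exceeds Krivine's $\frac2\pi\log(1+\sqrt2)$ by only about $3\cdot10^{-4}$. A weighted sum $\sum_k k^sc_k^2r^k$ does not rescue this: with $r<1$ it says nothing about the behaviour of $F$ near the unit circle, and at $r=1$ it diverges for $s\ge\frac12$. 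An $\ell^1$ uncertainty of order $10^{-2}$ in $F$ cannot be resolved to the required level.

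The paper's tail control relies on smoothness that only the two-channel structure provides. It bounds the tail of the composed series via $B_3^2=\sum n^6|b_n|^2$ and $\sum_{n>N}|b_n|\le B_3/\sqrt{10N^5}$. To compute $B_3$, it writes $\D^3H=T_{-t}\Phi_3$ by Price's identity and applies the weighted Hermite trace Lemma~\ref{lem:trace} on $|t|=1$. This works because $|\rho|\le R<1$ on the closed disk keeps the kernel $\phi_r$ nondegenerate, so $K_r(P(x),P(y))$ is smooth in $x,y$. The resulting integral is then certified ($B_3<107$) by a complete interval partition plus an analytic exterior bound. Neither the cancellation mechanism nor this smoothing exists for a one-dimensional $\sgn h$, so your plan cannot be completed as stated.
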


Krivine's classical bound is
$\pi/(2\log(1+\sqrt2))=1.782213978\ldots$; Braverman, Makarychev,
Makarychev, and Naor proved a strict improvement~\cite{BMMN}.
Saha et al.~\cite{Saha} give an explicit cubic--quintic construction with
upper bound $1.7818666069360661$. Their inverse-majorant and scalar-tail
framework is the starting point of this work. The companion account by
Li et al.~\cite{Li} also reports the stronger value $1.7813319810625639$ as a
system-tested claim whose certificate had not yet been independently checked
by its human authors. Theorem~\ref{thm:main} is numerically stronger than both.
The same account records $1.7802243$ as an earlier numerical claim that was
subsequently withdrawn. These comparisons refer to the cited versions and
public materials inspected on September 16, 2026.

Our contribution is an explicit construction and its complete certificate.
The basic mechanisms---signed tensor preprocessing, Gaussian threshold
partitions, Hermite expansions, inverse majorants, and Sobolev tail
estimates---are existing tools. In particular, increasing Hermite degree was
already suggested in~\cite{Saha}. Here the threshold coefficients and signed
correlation coefficients are selected separately. This permits cancellation
of low nonlinear terms with freely chosen correlation signs.
A tunable trace weight makes a direct interval
partition sufficiently sharp to certify the resulting construction.

\subsection{Proof structure and computational content}
Sections~\ref{sec:rounding}--\ref{sec:construction} reduce the theorem to a
scalar coefficient inequality. Section~\ref{sec:tail} gives the analytic
tail estimate, and Section~\ref{sec:certificate} specifies the rigorous
computation. Appendix~\ref{app:parameters} lists all exact parameters.
The ancillary source regenerates every numerical enclosure used below from
these parameters.

\section{Correlation maps and an inverse-majorant criterion}\label{sec:rounding}
For an absolutely summable real power series $g(t)=\sum_{j\ge0}g_jt^j$, put
$\norm{g}_{\mathcal A}=\sum_j|g_j|$. Products satisfy
$\norm{gh}_{\mathcal A}\le\norm{g}_{\mathcal A}\norm{h}_{\mathcal A}$.
We call an odd series \emph{allowable} if $\norm{g}_{\mathcal A}\le1$.

\begin{lemma}[Signed tensor realization]\label{lem:tensor}
Let $g$ be allowable. For any finite families of unit vectors $u_i,v_j$,
there are unit vectors $L(u_i),R(v_j)$ in a finite-dimensional real Hilbert
space with
$\langle L(u_i),R(v_j)\rangle=g(\langle u_i,v_j\rangle)$.
\end{lemma}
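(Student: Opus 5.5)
The plan is the standard Krivine-type signed tensor-power construction, followed by a projection to make everything finite-dimensional.

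\emph{Infinite-dimensional realization.} Write $g(t)=\sum_{j\ge0}g_jt^j$ with $g_j=0$ for even $j$. Take $H=\bigoplus_{j\ge1}(\R^d)^{\otimes j}$ (Hilbert direct sum) and set
\[
 L(u)=\bigoplus_j \sqrt{|g_j|}\,u^{\otimes j},\qquad
 R(v)=\bigoplus_j \sgn(g_j)\sqrt{|g_j|}\,v^{\otimes j}.
\]
Since $\langle u^{\otimes j},v^{\otimes j}\rangle=\langle u,v\rangle^j$, we get $\langle L(u),R(v)\rangle=\sum_j g_j\langle u,v\rangle^j=g(\langle u,v\rangle)$. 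Also $\norm{L(u)}^2=\norm{R(v)}^2=\sum_j|g_j|=\norm{g}_{\mathcal A}\le1$, and the series converge absolutely by allowability.

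\emph{Normalizing to unit vectors.} Let $s=1-\norm{g}_{\mathcal A}\ge0$. Adjoin two orthogonal extra coordinates $e,f$, orthogonal to $H$ and to each other. Set $L(u)\mapsto L(u)\oplus\sqrt s\,e$ and $R(v)\mapsto R(v)\oplus\sqrt s\,f$. These vectors are unit vectors, and the inner products are unchanged because $e\perp f$. Oddness is not actually needed for this step; it only matters later, for compatibility with odd rounding.

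\emph{Finite dimensionality.} Only finitely many vectors are involved. Let $V$ be the span of the finitely many vectors $L(u_i),R(v_j)$ inside the (possibly infinite-dimensional) Hilbert space $H\oplus\R e\oplus\R f$. Then $V$ is finite-dimensional, and choosing an orthonormal basis of $V$ gives an isometric copy in $\R^N$ with $N\le m+n$. This preserves norms and inner products.

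\emph{Main obstacle.} There is no real difficulty here. The only points needing care are convergence of the infinite direct sum when infinitely many $g_j\ne0$, which is handled by $\sum|g_j|<\infty$, and keeping the sign bookkeeping on one side only. The deficit coordinates $e\perp f$ are what make the inequality $\norm{g}_{\mathcal A}\le1$, rather than equality, suffice.
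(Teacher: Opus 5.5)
Your proposal is correct and follows the paper's proof essentially verbatim. Both use signed tensor powers weighted by $\sqrt{|g_j|}$ with the sign carried only by the right features, then pad with $\sqrt{1-\sum_j|g_j|}$ along two mutually orthogonal extra directions, and finally pass to the finite-dimensional span of the finitely many resulting vectors.
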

\begin{proof}
In the Hilbert direct sum of odd tensor powers, use
\[
 L_0(u)=\bigoplus_j\sqrt{|g_j|}\,u^{\otimes j},\qquad
 R_0(v)=\bigoplus_j\sgn(g_j)\sqrt{|g_j|}\,v^{\otimes j}.
\]
Their squared norms are $s=\sum_j|g_j|\le1$, and their cross inner product
is $g(\langle u,v\rangle)$. Add $\sqrt{1-s}\,e_L$ to every left feature
and $\sqrt{1-s}\,e_R$ to every right feature, where $e_L,e_R$ are mutually
orthogonal and orthogonal to the tensor space. The resulting vectors have
unit norm. Although the ambient direct sum may be infinite, the finitely
many vectors needed for the instance span a finite-dimensional subspace.
\end{proof}

\begin{lemma}[Inverse majorant]\label{lem:inverse}
Let $H(z)=b_1z+\sum_{n\ge3,\ n\text{ odd}}b_nz^n$ have real, absolutely
summable coefficients, with $b_1>0$. If $\gamma>0$ satisfies
\begin{equation}\label{eq:criterion}
 \gamma+\sum_{n\ge3}|b_n|<b_1,
\end{equation}
then the local inverse $U=H^{-1}$ satisfies
$\sum_n|[z^n]U|\gamma^n<1$. In particular, $k(t)=U(\gamma t)$ is allowable
and $H(k(t))=\gamma t$ for every $t\in[-1,1]$.
\end{lemma}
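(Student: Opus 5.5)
Rescale and write the inverse as a fixed point of a contraction on coefficient sequences; the key tool is the majorant series. Put $c_n=|b_n|/b_1$ for odd $n\ge3$ and $\rho=\gamma/b_1$. The hypothesis \eqref{eq:criterion} becomes $\rho+\sum_{n\ge3}c_n<1$. Writing $H(z)=b_1\bigl(z+\sum_{n\ge3}(b_n/b_1)z^n\bigr)$, the equation $H(U(w))=w$ with $U(0)=0$ is equivalent to
\[
U(w)=\frac{w}{b_1}-\sum_{n\ge3}\frac{b_n}{b_1}U(w)^n .
\]
Comparing coefficients determines $[w^m]U$ recursively. Each coefficient is a polynomial in $1/b_1$ and the $b_n/b_1$ with earlier coefficients of $U$ plugged in. So the formal inverse exists and is unique, and it is odd because $H$ is odd.

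The heart of the argument is a comparison with the majorant equation
\[
V(w)=\frac{w}{b_1}+\sum_{n\ge3}c_nV(w)^n .
\]
Its formal solution has nonnegative coefficients. By induction on the degree, $|[w^m]U|\le[w^m]V$ for all $m$: the recursion for $U$ is obtained from the one for $V$ by changing signs of inputs, and the triangle inequality propagates termwise. Hence $\sum_m|[w^m]U|\gamma^m\le V(\gamma)$, provided $V(\gamma)$ converges. Note that since $V$ has nonnegative coefficients, this sum equals the sum of $|[w^m]V|\gamma^m$.

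It remains to show $V(\gamma)<1$. This is the step needing care, since one must justify convergence and not merely a formal identity. Let $\Phi(x)=\rho+\sum_{n\ge3}c_nx^n$ on $[0,1]$. Then $\Phi$ is continuous and increasing, and $\Phi(1)=\rho+\sum c_n<1$. So $\Phi$ maps $[0,1]$ into $[0,\Phi(1)]\subset[0,1)$. Define partial sums $V_N$, the truncations of $V$ to degree $\le N$, with nonnegative coefficients. Now run the standard argument: the Picard iterates $V^{(0)}=0$ and $V^{(k+1)}(w)=w/b_1+\sum c_n(V^{(k)}(w))^n$ have nonnegative coefficients that increase in $k$. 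Each iterate agrees with $V$ up to degree $k$. Evaluating at $w=\gamma$ gives $V^{(k)}(\gamma)$, and by induction $V^{(k)}(\gamma)\le\Phi(1)<1$: if $V^{(k)}(\gamma)\le1$, then $V^{(k+1)}(\gamma)=\Phi(V^{(k)}(\gamma))\le\Phi(1)$. Thus every truncation $V_k(\gamma)\le V^{(k)}(\gamma)\le\Phi(1)$. By monotone convergence of nonnegative series, $V(\gamma)\le\Phi(1)<1$. This yields $\sum_n|[z^n]U|\gamma^n<1$.

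Finally, set $k(t)=U(\gamma t)$. Then $k$ is odd, and $\norm{k}_{\mathcal A}=\sum|[z^n]U|\gamma^n<1$, so $k$ is allowable. For $|t|\le1$ the series $U(\gamma t)$ converges absolutely with $|k(t)|<1$. Since $H$ has absolutely summable coefficients, $H$ converges absolutely on $|z|\le1$. The composition $H(k(t))$ may therefore be rearranged as an absolutely convergent double series, because all majorant sums are bounded by $b_1V(\gamma)$-type quantities. Its coefficients match the formal identity $H(U(w))=w$, which gives $H(k(t))=\gamma t$.
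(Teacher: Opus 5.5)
Your proof is correct and follows essentially the paper's argument: the same majorant $V$ obtained from the Picard iteration $V\mapsto\bigl(z+\sum_{n\ge3}|b_n|V^n\bigr)/b_1$, the coefficientwise domination $|[z^n]U|\le[z^n]V$, and a bound on the iterates evaluated at a point. The differences are minor. You evaluate the iterates at $\gamma$ itself, which gives $V(\gamma)\le(\gamma+\sum_{n\ge3}|b_n|)/b_1<1$, and you justify $H(k(t))=\gamma t$ by absolutely rearranging the multi-indexed series, whose absolute sum is at most $\sum_n|b_n|<\infty$. The paper instead evaluates at an auxiliary $z_0>\gamma$ with $V_r(z_0)\le q<1$, so that the composition becomes an analytic identity on a disk strictly larger than $|z|\le\gamma$.
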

\begin{proof}
Put $S(v)=\sum_{n\ge3}|b_n|v^n$. Choose $z_0>\gamma$ with
$z_0+S(1)<b_1$, and then $q<1$ sufficiently close to $1$ that
$z_0+S(q)<b_1q$. The formal iterates
\[
 V_0(z)=0,\qquad V_{r+1}(z)=\frac{z+S(V_r(z))}{b_1}
\]
have nonnegative coefficients, increase coefficientwise, and satisfy
$V_r(z_0)\le q$. Their coefficientwise limit $V$ therefore converges
absolutely at $z_0$. In the equation
$U=(z-\sum_{n\ge3}b_nU^n)/b_1$, the coefficient of each degree depends only
on earlier coefficients. Induction gives
$|[z^n]U|\le[z^n]V$, and hence
$\sum_n|[z^n]U|\gamma^n\le V(\gamma)\le q<1$.
The formal composition is consequently an analytic identity, and absolute
convergence extends it to the real endpoints. Oddness follows by formal
inversion.
\end{proof}

Let $\gamma_1$ denote standard Gaussian measure on $\R$, and let
$h_j=\He_j/\sqrt{j!}$ be its orthonormal probabilists' Hermite polynomials.
Suppose $P$ is an odd polynomial and $\rho$ is an allowable odd polynomial.
For independent standard Gaussians $W,X$, define
\begin{equation}\label{eq:coefficients}
 f(w,x)=\sgn(w+P(x)),\qquad
 A_{ab}=\E[f(W,X)h_a(W)h_b(X)].
\end{equation}
Oddness under simultaneous reflection implies $A_{ab}=0$ for even $a+b$,
and Parseval gives $\sum_{a,b}A_{ab}^2=1$.
Set
\begin{equation}\label{eq:H}
 H(t)=\frac\pi2\sum_{a,b\ge0}A_{ab}^2\rho(t)^a(-t)^b.
\end{equation}
If $R=\norm{\rho}_{\mathcal A}\le1$, then
\begin{equation}\label{eq:algebra}
 \norm{H}_{\mathcal A}
 \le\frac\pi2\sum_{a,b}A_{ab}^2R^a\le\frac\pi2.
\end{equation}
Thus regrouping by scalar degree is legitimate and $H$ is odd.

\begin{proposition}[Universal transfer]\label{prop:transfer}
For $H$ in~\eqref{eq:H}, condition~\eqref{eq:criterion} implies
$\KG\le\pi/(2\gamma)$.
\end{proposition}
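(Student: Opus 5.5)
The plan is a Krivine-type two-stage rounding: first an allowable preprocessing, then a two-coordinate Gaussian sign rounding whose bilinear correlation is exactly $\tfrac2\pi H$. Assume~\eqref{eq:criterion}. By Lemma~\ref{lem:inverse}, $k(t)=U(\gamma t)$ is allowable and satisfies $H(k(t))=\gamma t$ on $[-1,1]$. Fix a matrix $A$ and unit vectors $u_i,v_j$, and write $t_{ij}=\langle u_i,v_j\rangle$.

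\emph{Step 1 (preprocessing).} Apply Lemma~\ref{lem:tensor} with $g=k$. This gives unit vectors $x_i,y_j$ with $\langle x_i,y_j\rangle=s_{ij}:=k(t_{ij})\in[-1,1]$.

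\emph{Step 2 (a second, independent feature).} Apply Lemma~\ref{lem:tensor} again, now with $g=\rho$ and to the families $x_i,y_j$. This gives unit vectors $x_i',y_j'$ with $\langle x_i',y_j'\rangle=\rho(s_{ij})$.

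\emph{Step 3 (rounding).} Take independent standard Gaussian vectors $G,G'$ in the relevant finite-dimensional spaces, and set
\[
\epsilon_i=f\bigl(\langle G,x_i'\rangle,\langle G',x_i\rangle\bigr),\qquad
\delta_j=f\bigl(\langle G,y_j'\rangle,\langle G',-y_j\rangle\bigr).
\]
For fixed $(i,j)$, the pair $(W,W')=(\langle G,x_i'\rangle,\langle G,y_j'\rangle)$ is standard bivariate Gaussian with correlation $\alpha=\rho(s_{ij})$. The pair $(X,X')=(\langle G',x_i\rangle,\langle G',-y_j\rangle)$ is standard bivariate Gaussian with correlation $\beta=-s_{ij}$. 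The two pairs are independent of each other. Both $\epsilon_i$ and $\delta_j$ lie in $\{-1,1\}$ almost surely, since $w+P(x)=0$ is a null event.

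\emph{Step 4 (two-variable Mehler identity).} The key identity to establish is
\[
\E[f(W,X)f(W',X')]=\sum_{a,b}A_{ab}^2\alpha^a\beta^b .
\]
For Hermite products this follows from $\E[h_a(W)h_{a'}(W')]=\delta_{aa'}\alpha^a$, the analogous identity for $X$, and independence of the two pairs. For general $f\in L^2(\gamma_1\otimes\gamma_1)$, expand $f=\sum A_{ab}h_a\otimes h_b$ in $L^2$. The map $(\phi,\psi)\mapsto\E[\phi(W,X)\psi(W',X')]$ is a bounded bilinear form: by Cauchy--Schwarz its norm is at most $1$, because both marginals are $\gamma_1\otimes\gamma_1$. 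Passing to the limit of partial sums is therefore legitimate. The resulting double series converges absolutely, since $|\alpha|,|\beta|\le1$ and $\sum A_{ab}^2=1$.

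This justification is the step I expect to need the most care, together with matching the sign convention $\beta=-s$ to the $(-t)^b$ in~\eqref{eq:H}. With that done, we obtain
\[
\E[\epsilon_i\delta_j]=\tfrac2\pi H(s_{ij})=\tfrac2\pi H(k(t_{ij}))=\tfrac{2\gamma}{\pi}t_{ij}.
\]
Here the middle equality uses the regrouping of $H$ by scalar degree, which is legitimate by~\eqref{eq:algebra}.

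\emph{Step 5 (conclusion).} Summing against $a_{ij}$ gives
\[
\Bigl|\sum a_{ij}\langle u_i,v_j\rangle\Bigr|=\frac{\pi}{2\gamma}\Bigl|\E\sum a_{ij}\epsilon_i\delta_j\Bigr|\le\frac{\pi}{2\gamma}B(A).
\]
The inequality holds because every realization of $(\epsilon,\delta)$ is a choice of signs, so $\bigl|\sum a_{ij}\epsilon_i\delta_j\bigr|\le B(A)$ pointwise. Since $A$ and the unit vectors were arbitrary, $\KG\le\pi/(2\gamma)$.
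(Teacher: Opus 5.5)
Your proposal is correct and follows essentially the same route as the paper. Both arguments precompose with $k=H^{-1}(\gamma\,\cdot)$ via Lemma~\ref{lem:tensor}, then realize $\rho$ by a second application of that lemma (your use of $-y_j$ is the paper's application of it to $-t$). Both then use two independent Gaussian channels, each shared by all features, and Mehler's identity to get $\E[\epsilon_i\delta_j]=\tfrac{2\gamma}{\pi}t_{ij}$. Your bounded-bilinear-form justification of the two-variable Mehler identity is valid for all $|\alpha|,|\beta|\le1$, so it directly covers the degenerate correlations that the paper handles with a continuity remark.
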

\begin{proof}
Apply Lemma~\ref{lem:tensor} first to $k(t)=H^{-1}(\gamma t)$ and then
to $\rho$ for the first Gaussian channel and $-t$ for the second. Within
each channel, use one Gaussian vector shared between all left and right
features; use independent Gaussian vectors for the two channels. For a pair
with original inner product $t$, the two cross-correlations are
$\rho(k(t))$ and $-k(t)$. Mehler's identity yields
\[
 \E[f(W_i,X_i)f(W'_j,X'_j)]
 =\frac2\pi H(k(t))=\frac{2\gamma}\pi t.
\]
The identity also holds for degenerate correlations by continuity. The
marginal threshold variable has a continuous distribution, so the convention
for $\sgn(0)$ is immaterial. Taking the expectation of the signed bilinear
form and bounding its absolute value by $B(A)$ proves
\eqref{eq:grothendieck} with $K=\pi/(2\gamma)$.
\end{proof}

\section{The explicit construction}\label{sec:construction}
Take
\begin{equation}\label{eq:polynomials}
 P(x)=\sum_{j\in\{1,3,5,7,9,11\}}\alpha_jh_j(x),\qquad
 \rho(t)=\sum_{j=1,3,\ldots,51}c_jt^j,
\end{equation}
with the exact rational coefficients in Appendix~\ref{app:parameters}.
Direct rational arithmetic gives
\begin{equation}\label{eq:R}
 R=\sum_j|c_j|=0.99997620388569307924355896<1.
\end{equation}
The decimals in the appendix define exact rational coefficients. We set
\begin{equation}\label{eq:gamma}
 \gamma=\frac{5000\pi}{17813}.
\end{equation}

\subsection{How the candidate was obtained}
For fixed $P$, put
$F(r,t)=\sum A_{ab}^2r^a(-t)^b$. Formally solving
$F(\rho(t),t)=2\gamma_0t/\pi$ gives the triangular recurrence
\begin{align}
 c_1&=\frac{2\gamma_0/\pi+A_{01}^2}{A_{10}^2},\label{eq:search1}\\
 c_n&=-\frac{[t^n]F(\rho_{<n}(t),t)}{A_{10}^2}
       \qquad(n\ge3\text{ odd}).\label{eq:searchn}
\end{align}
We optimized the threshold against the truncated absolute sum of these
correlation coefficients. This parametrization allows either sign for each
$c_n$. It also avoids the vanishing derivative that occurs when a new
correlation weight is represented as a square and initialized at zero.
After exploration, the correlation series was truncated to degree $51$,
and all retained coefficients were frozen as rationals. The chosen design
slope was approximately $\gamma+2.5\cdot10^{-5}$, leaving room for a
rigorous scalar-tail estimate. The proof below evaluates the frozen
polynomials directly.

\section{Certified scalar tails}\label{sec:tail}
Write $H(t)=\sum b_nt^n$ and $\D=t\,d/dt$.
The normalized $L^2$ norm on the circle is
$\norm{g}_{L^2(\mathbb T)}^2=(2\pi)^{-1}\int_0^{2\pi}|g(e^{i\theta})|^2d\theta$.

\begin{lemma}[Weighted Hermite trace]\label{lem:trace}
For $a>0$, define
\[
 C_a=\frac12+\frac\pi{2\sqrt a}\coth\frac\pi{\sqrt a}.
\]
If $G\in L^2(\gamma_1^{\otimes2})$ has mixed derivative in the same space,
and $T_sG=\sum_{n\ge0}\widehat G_{nn}s^n$, then, for $|s|\le1$,
\begin{equation}\label{eq:trace}
 |T_sG|^2\le C_a\bigl(\norm{G}_2^2+a\norm{\partial_{xy}G}_2^2\bigr).
\end{equation}
\end{lemma}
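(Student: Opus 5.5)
The plan is to work entirely in the orthonormal Hermite basis of $L^2(\gamma_1^{\otimes2})$. I expand $G=\sum_{m,n\ge0}\widehat G_{mn}h_m(x)h_n(y)$, so that Parseval gives $\norm{G}_2^2=\sum_{m,n}|\widehat G_{mn}|^2$. The estimate then reduces to a weighted Cauchy--Schwarz inequality on the diagonal coefficients, with the weight $1+an^2$ matched to the mixed derivative.

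\emph{Step 1: the mixed derivative in coefficients.} I will show that $\norm{\partial_{xy}G}_2^2=\sum_{m,n\ge1}mn\,|\widehat G_{mn}|^2$.
\begin{itemize}
\item The standard facts are $h_m'=\sqrt m\,h_{m-1}$ and that the adjoint of $\partial_x$ in $L^2(\gamma_1)$ is $\partial_x^*=x-\partial_x$, with $\partial_x^*h_{m-1}=\sqrt m\,h_m$.
\item Testing the weak derivative $\partial_{xy}G$ against $h_{m-1}(x)h_{n-1}(y)$ and integrating by parts once in each variable therefore gives $\langle\partial_{xy}G,h_{m-1}\otimes h_{n-1}\rangle=\sqrt{mn}\,\widehat G_{mn}$.
\item Parseval applied to $\partial_{xy}G\in L^2$ then yields the displayed identity.
\item To make this rigorous, I would interpret ``mixed derivative in $L^2$'' in the weak sense against polynomial test functions, which are dense. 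This is the only point needing care; otherwise I would first prove the identity for polynomial $G$ and pass to limits.
\end{itemize}

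\emph{Step 2: weighted Cauchy--Schwarz.} For $|s|\le1$ we have $|T_sG|\le\sum_{n\ge0}|\widehat G_{nn}|$. By Cauchy--Schwarz with weights $1+an^2$,
\[
 |T_sG|^2\le\Bigl(\sum_{n\ge0}\frac1{1+an^2}\Bigr)\Bigl(\sum_{n\ge0}(1+an^2)|\widehat G_{nn}|^2\Bigr).
\]
The second factor consists only of diagonal terms, so by Step 1 it is at most $\norm{G}_2^2+a\norm{\partial_{xy}G}_2^2$. The same bound shows that the series defining $T_sG$ converges absolutely, so $T_sG$ is well defined.

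\emph{Step 3: evaluating the constant.} Write $\frac1{1+an^2}=\frac1a\cdot\frac1{n^2+b^2}$ with $b=1/\sqrt a$. The classical partial-fraction expansion of the cotangent gives
\[
 \sum_{n\in\mathbb Z}\frac1{n^2+b^2}=\frac\pi b\coth(\pi b),
\]
hence
\[
 \sum_{n\in\mathbb Z}\frac1{1+an^2}=\frac\pi{\sqrt a}\coth\frac\pi{\sqrt a}.
\]
The summand is even in $n$, so
\[
 \sum_{n\ge0}\frac1{1+an^2}=\frac12\Bigl(1+\frac\pi{\sqrt a}\coth\frac\pi{\sqrt a}\Bigr)=C_a,
\]
which proves \eqref{eq:trace}.

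I expect no real obstacle in Steps 2 and 3. The only delicate step is Step 1, namely justifying the coefficient formula for the weak mixed derivative.
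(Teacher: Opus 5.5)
Your proposal is correct and follows the paper's proof essentially step for step. Both use weighted Cauchy--Schwarz with weights $1+an^2$, then the coefficient identity $\norm{\partial_{xy}G}_2^2=\sum_{m,n}mn|\widehat G_{mn}|^2$ to bound the diagonal factor, and finally the partial-fraction expansion of $\coth$ to evaluate $\sum_{n\ge0}(1+an^2)^{-1}=C_a$; your weak-derivative justification of Step~1 (integrating by parts against $h_{m-1}\otimes h_{n-1}$) is just a more explicit form of the paper's remark that the argument extends from finite Hermite sums by approximation in the Gaussian Sobolev norm.
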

\begin{proof}
Weighted Cauchy--Schwarz bounds the left side by
\[
 \left(\sum_{n\ge0}\frac1{1+an^2}\right)
 \sum_{n\ge0}(1+an^2)|\widehat G_{nn}|^2.
\]
The second factor is bounded by
$\norm{G}_2^2+a\norm{\partial_{xy}G}_2^2$, since
$\norm{\partial_{xy}G}_2^2=\sum_{m,n}mn|\widehat G_{mn}|^2$.
The partial-fraction expansion of $\coth$ gives the displayed value of
$\sum_{n\ge0}(1+an^2)^{-1}$. Approximation in the Gaussian Sobolev norm
extends the argument from finite Hermite sums.
\end{proof}

For $|r|<1$, let
\[
 \phi_r(u,v)=\frac1{2\pi\sqrt{1-r^2}}
 \exp\!\left(-\frac{u^2-2ruv+v^2}{2(1-r^2)}\right),
\]
using the branch of the square root equal to $1$ at $r=0$. The square root
is analytic on the unit disk. Let
$K_r(u,v)=(\pi/2)\E[\sgn(Z_1-u)\sgn(Z_2-v)]$ for real $r$, continued
analytically when derivatives are taken. Then
\begin{equation}\label{eq:price}
 \partial_rK_r=\partial_u\partial_vK_r=2\pi\phi_r,
 \qquad \partial_r\phi_r=\partial_u\partial_v\phi_r.
\end{equation}
Simultaneous sign reversal of the thresholds shows that this kernel also
represents the $+P$ convention in~\eqref{eq:coefficients}.

Put $q_j=\D^j\rho(t)$. Three applications of Price's identity give
\begin{equation}\label{eq:Phi}
 \D^3H(t)=T_{-t}\Phi_3(t),\qquad
 \Phi_3(t;x,y)=\left.
 \sum_{p,a}\kappa_{pa}(t)\partial_r^p\partial_x^a\partial_y^a
 K_r(P(x),P(y))\right|_{r=\rho(t)},
\end{equation}
where the nonzero coefficients are listed in Table~\ref{tab:chain}.
In the $x,y$ derivatives, $r$ is held fixed. The identity initially holds
for $|t|<1$ by the Hermite expansion and differentiation under the Gaussian
integral.

\begin{table}[ht]
\centering
\caption{The coefficients in the combined third-derivative kernel.}\label{tab:chain}
\begin{tabular}{cc@{\qquad}cc@{\qquad}cc}
\toprule
$(p,a)$&$\kappa_{pa}$&$(p,a)$&$\kappa_{pa}$&$(p,a)$&$\kappa_{pa}$\\
\midrule
$(1,0)$&$q_3$&$(0,1)$&$-t$&$(1,1)$&$-3t(q_2+q_1)$\\
$(2,0)$&$3q_1q_2$&$(0,2)$&$3t^2$&$(2,1)$&$-3tq_1^2$\\
$(3,0)$&$q_1^3$&$(0,3)$&$-t^3$&$(1,2)$&$3t^2q_1$\\
\bottomrule
\end{tabular}
\end{table}

\begin{proposition}\label{prop:tail}
For the polynomials~\eqref{eq:polynomials}, and any $a>0$,
\begin{align}
 B_3^2:=\sum_{n\ge1}n^6|b_n|^2
 &\le \frac{C_a}{\pi}\int_0^\pi
 \bigl(\norm{\Phi_3(e^{i\theta})}_2^2
       +a\norm{\partial_{xy}\Phi_3(e^{i\theta})}_2^2\bigr)d\theta.
 \label{eq:B3}
\end{align}
For every positive odd $N$,
\begin{equation}\label{eq:tail}
 \sum_{n>N}|b_n|\le\frac{B_3}{\sqrt{10N^5}}.
\end{equation}
\end{proposition}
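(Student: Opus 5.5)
The plan is to obtain \eqref{eq:B3} from Parseval on the circle combined with Lemma~\ref{lem:trace}, and then to derive \eqref{eq:tail} from \eqref{eq:B3} by a weighted Cauchy--Schwarz inequality.

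\textbf{Parseval on circles.} Since $\D^3H(t)=\sum_n n^3b_nt^n$, for $0<s<1$ Parseval gives
\[
 \sum_n n^6|b_n|^2s^{2n}=\frac1{2\pi}\int_0^{2\pi}|\D^3H(se^{i\theta})|^2\,d\theta .
\]
Working at radius $s<1$ first avoids any a priori convergence question for $\sum n^3|b_n|$ on the unit circle.

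\textbf{Applying the trace lemma.} By \eqref{eq:Phi}, continued analytically in $t$ on the open disk, $\D^3H(t)=T_{-t}\Phi_3(t)$. Since $|-t|\le1$, Lemma~\ref{lem:trace}, used with complex-valued $G$, bounds the integrand by
\[
 C_a\bigl(\norm{\Phi_3(t)}_2^2+a\norm{\partial_{xy}\Phi_3(t)}_2^2\bigr).
\]
The paper's real-coefficient statement of Lemma~\ref{lem:trace} extends verbatim to complex $G$, because its proof uses only Cauchy--Schwarz and Parseval. To justify the analytic continuation and the hypotheses of Lemma~\ref{lem:trace}, I will use that $|\rho(t)|\le R<1$ on the closed disk by \eqref{eq:R}. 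Hence $r=\rho(t)$ stays in a compact subset of the open unit disk. On such a set the real part of $(u^2-2ruv+v^2)/(1-r^2)$ is a uniformly positive definite quadratic form, so $\phi_r$ and its derivatives decay like Gaussians uniformly. Every term of $\Phi_3$ and of $\partial_{xy}\Phi_3$ is a finite sum of $r$- and $(u,v)$-derivatives of $K_r$, evaluated at $(P(x),P(y))$ and multiplied by polynomials in $P'(x),P'(y),\dots$. By \eqref{eq:price}, every such derivative except the bare $K_r$ is a derivative of $\phi_r$. The bare $K_r$ is bounded because $K_r=K_0+2\pi\int_0^r\phi_{r'}\,dr'$ along a path in the compact set. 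Therefore all these functions lie in $L^2(\gamma_1^{\otimes2})$, uniformly for $|t|\le1$, and depend continuously on $t$ there. Dominated convergence then lets $s\to1$ on the right-hand side, while monotone convergence handles the left-hand side, giving the bound for $B_3^2$ with $\tfrac1{2\pi}\int_0^{2\pi}$. I expect this continuation-and-integrability step, in particular the uniform decay of $\phi_r$ for complex $r$, to be the main technical obstacle; the rest is bookkeeping.

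\textbf{Halving the integral.} The coefficients of $\rho$, $P$, $\kappa_{pa}$ and $K_r$ are real. Hence $\Phi_3(\bar t)=\overline{\Phi_3(t)}$ pointwise, and the two norms are even in $\theta$. So $\tfrac1{2\pi}\int_0^{2\pi}$ equals $\tfrac1\pi\int_0^\pi$, which is \eqref{eq:B3}.

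\textbf{The tail bound \eqref{eq:tail}.} $H$ is odd, so only odd $n\ge N+2$ occur in the tail. Cauchy--Schwarz gives
\[
 \sum_{n>N}|b_n|\le B_3\Bigl(\sum_{n\ge N+2,\ n\text{ odd}}n^{-6}\Bigr)^{1/2}.
\]
By convexity of $x^{-6}$, $n^{-6}\le\frac12\int_{n-1}^{n+1}x^{-6}\,dx$. The intervals $[n-1,n+1]$ over odd $n\ge N+2$ tile $[N+1,\infty)$, so the sum is at most $\frac12\int_N^\infty x^{-6}\,dx=1/(10N^5)$, which gives \eqref{eq:tail}.
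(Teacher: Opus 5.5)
Your proposal is correct and follows essentially the paper's argument. Both use Parseval for $\D^3H$, Lemma~\ref{lem:trace} applied to $\Phi_3$ with uniform domination coming from $|\rho|\le R<1$ and the damping bound, conjugation symmetry to halve the angular integral, and Cauchy--Schwarz against $\sum_{n\ \mathrm{odd}}n^{-6}$.

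The differences are minor and both variants are valid. You reach the unit circle by letting $s\to1$, using monotone convergence on the coefficients and dominated convergence on the kernel norms. The paper instead first shows that the trace extends continuously to the closed disk. You also bound $n^{-6}$ by convexity on $[n-1,n+1]$ rather than by monotonicity on $[n-2,n]$.
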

\begin{proof}
By~\eqref{eq:R}, $|\rho(t)|\le R<1$ on the closed disk. All derivatives
appearing in $\Phi_3$ and $\partial_{xy}\Phi_3$ are a polynomial in $x,y$
times a rational function of $r,1-r^2$ and the Gaussian density. Uniform
domination follows from
\begin{equation}\label{eq:damping}
 \Re\frac1{1\pm r}\ge\frac1{1+|r|}>\frac12.
\end{equation}
Consequently these kernels extend continuously to the closed disk in the
Gaussian Sobolev norm of Lemma~\ref{lem:trace}. The trace also extends
continuously: its high diagonal terms are uniformly small by the summable
weights $(1+an^2)^{-1}$. Thus~\eqref{eq:Phi} has a continuous boundary
value. Parseval and Lemma~\ref{lem:trace} give~\eqref{eq:B3}; real
coefficients imply conjugation symmetry, reducing the full-circle mean to
the interval $[0,\pi]$.
For the tail, Cauchy--Schwarz gives
$\sum_{n>N}|b_n|\le B_3(\sum_{n>N,\ n\text{ odd}}n^{-6})^{1/2}$.
For odd $n\ge N+2$, the interval $[n-2,n]$ has length $2$ and
$n^{-6}\le\tfrac12\int_{n-2}^n x^{-6}dx$. Summing yields
$\sum_{n>N,\ n\text{ odd}}n^{-6}\le(10N^5)^{-1}$.
\end{proof}

\section{The numerical certificate}\label{sec:certificate}
All accepted numerical inequalities use outward-rounded Arb ball
arithmetic~\cite{Johansson}; all input parameters are exact rationals.
The source package pins \texttt{python-flint==0.9.0}. The recorded runs used
FLINT~3.6.0. The following lemma states the complete computational content.

\begin{lemma}[Certified numerical inequalities]\label{lem:numeric}
For the parameters of Appendix~\ref{app:parameters},
\begin{align}
 b_1&>0.881850816,\label{eq:b1}\\
 \sum_{3\le n\le301}|b_n|&<0.000000030,\label{eq:head}\\
 B_3&<107.\label{eq:bound107}
\end{align}
Only odd indices contribute to the sums.
\end{lemma}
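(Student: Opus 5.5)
This lemma is a statement about rigorous computation, so the proof consists of describing how each inequality is certified by outward-rounded ball arithmetic from the exact rational data of Appendix~\ref{app:parameters}.

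\emph{Step 1: Hermite coefficients.} I will first enclose the Hermite coefficients $A_{ab}$ of~\eqref{eq:coefficients} for all $a+b\le 301$, $a+b$ odd. The plan is to write $A_{ab}=\E\bigl[h_b(X)\,\E[\sgn(W+P(X))h_a(W)\mid X]\bigr]$. The inner expectation is explicit:
\begin{itemize}
\item for $a=0$ it equals $\erf(P(x)/\sqrt2)$;
\item for $a\ge1$ it equals $2h_{a-1}(-P(x))\varphi(P(x))/\sqrt a$, up to sign conventions, where $\varphi$ is the standard Gaussian density.
\end{itemize}
Each $A_{ab}$ is then a one-dimensional Gaussian integral of an entire function of $x$. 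I will enclose it by splitting $\R$ into a finite interval $[-X_0,X_0]$ and an exterior. On the interval I use certified Gauss--Legendre or Taylor-model quadrature on a partition (Arb's rigorous integration). On the exterior I use an analytic bound, since $|P(x)|$ grows like $x^{11}$ and the integrand is dominated by $|h_b(x)|\varphi(x)$ times a bounded factor.

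\emph{Step 2: the head of $H$.} With the enclosed $A_{ab}^2$ and the exact $c_j$, I form the power series $\rho(t)^a(-t)^b$ truncated at degree $301$ in exact or ball arithmetic. Summing gives balls for $b_n$, $n\le301$, through~\eqref{eq:H}. The bounds~\eqref{eq:b1} and~\eqref{eq:head} then follow by reading off the upper endpoint of $b_1$'s complement and the sum of ball upper bounds of $|b_n|$.

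Two points need care here. First, pairs with $a+b>301$ contribute nothing to degrees $\le301$, because $\rho(t)^a$ has valuation $a$. So the truncation is exact apart from the enclosure of the $A_{ab}$. Second, the head sum~\eqref{eq:head} is tiny, $3\cdot10^{-8}$, because the construction cancels low terms. The balls on $A_{ab}$ must therefore be very tight, which fixes the working precision.

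\emph{Step 3: the tail constant $B_3$.} For~\eqref{eq:bound107} I use Proposition~\ref{prop:tail} with a chosen $a$ and bound the right side of~\eqref{eq:B3}. For each $\theta$, the quantity $\norm{\Phi_3(e^{i\theta})}_2^2+a\norm{\partial_{xy}\Phi_3}_2^2$ is a double Gaussian integral of explicit closed forms. These forms come from Table~\ref{tab:chain} and~\eqref{eq:price}: derivatives of $K_r$ reduce to $2\pi\phi_r$, its derivatives, and the derivatives of $P$.

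I cover $[0,\pi]$ by subintervals and $[-X_0,X_0]^2$ by boxes. On each cell I evaluate the integrand in interval arithmetic, with $\theta$ as a ball, and take the supremum times the volume, giving a Riemann upper bound. The spatial exterior is controlled analytically using~\eqref{eq:damping}, which gives Gaussian decay uniform in $|r|\le R$, together with polynomial growth of the prefactors. Summing the cell bounds, multiplying by $C_a/\pi$ and taking the square root must give a value below $107$.

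\emph{Main obstacle.} I expect Step 3 to be the hardest. Near $|r|=R\approx1$ the kernel $\phi_r$ has factors $(1-r^2)^{-k}$ that are large, and it concentrates near the diagonal $u=v$, that is near $P(x)=P(y)$. Naive box enclosures overestimate badly there, and this inflates $B_3$. I would first try adaptive bisection of the boxes near the diagonal and near the $\theta$ where $|\rho(e^{i\theta})|$ is maximal. I would also tune $a$, trading $C_a$ against the derivative norm. Only if that fails would I use Taylor models in $(x,y)$.
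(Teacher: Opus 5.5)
Your plan follows the paper's proof essentially step for step: the conditional-integration formula for $A_{ab}$ with complex ball quadrature on a finite interval plus a Cauchy--Schwarz exterior bound, exact truncation of $\rho(t)^a(-t)^b$ at degree $301$ via $\rho(0)=0$, and an adaptive interval partition of the $(x,y,\theta)$ integral in~\eqref{eq:B3}, here with $a=1/1000$, closed by an analytic spatial exterior bound from~\eqref{eq:damping} and $|P(x)|\ge c_8|x|^{11}$ for $|x|\ge8$. The one device the paper uses where you suggest Taylor models is to evaluate the exponent as $\tfrac12(A-B)(u+v)^2+\tfrac12(A+B)(u-v)^2$, whose coefficients are at least $1/2$, so that interval dependency along the diagonal $u=v$ cannot blow up the enclosure of the exponential.
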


The proof of this computational lemma consists of the enclosure algorithms
and the reproducible finite calculation described next.

\subsection{The coefficient head}
Conditional integration gives
\begin{equation}\label{eq:q}
 A_{ab}=\int_\R h_b(x)q_a(P(x))\,d\gamma_1(x),\qquad
 q_0(z)=\erf(z/\sqrt2),\quad
 q_a(z)=\frac{2\phi(z)h_{a-1}(-z)}{\sqrt a}\quad(a\ge1).
\end{equation}
These are entire integrands in the integration variable. Complex ball
quadrature encloses the integral on $[-16,16]$ at 256-bit working precision.
For real $z$, Cauchy--Schwarz in the conditional Gaussian variable gives
$|q_a(z)|\le1$. Therefore the omitted, two-sided exterior satisfies
\begin{equation}\label{eq:headtail}
 \abs{\int_{|x|>16}h_b(x)q_a(P(x))\,d\gamma_1(x)}
 \le\sqrt{\Pr(|X|>16)}=\sqrt{\erfc(16/\sqrt2)}.
\end{equation}
Odd parity permits integration over $[0,16]$ and multiplication by $2$.
The code encloses all $22\,952$ pairs with $a+b\le301$ and odd $a+b$.
Since $\rho(0)=0$, terms with $a+b>301$ cannot contribute to the head.
Finite interval polynomial multiplication then gives
\begin{equation}\label{eq:head-values}
 b_1=0.88185081642333837919\ldots,
 \qquad \sum_{3\le n\le301}|b_n|<2.916517\cdot10^{-8},
\end{equation}
where the first displayed decimal is descriptive; the accepted strict
inequalities are~\eqref{eq:b1}--\eqref{eq:head}.

\subsection{The combined kernel on the finite domain}
We use $a=1/1000$ in~\eqref{eq:B3}. To specify the integrand without
symbolic ambiguity, put $u=P(x)$, $v=P(y)$, $r=\rho(t)$, $d=1-r^2$.
Let $F_{ij}=\phi_r^{-1}\partial_u^i\partial_v^j\phi_r$.
Writing $\ell_u=(-u+rv)/d$ and $\ell_v=(-v+ru)/d$, compute
\begin{align*}
 F_{00}&=1,\\
 F_{0j}&=\ell_vF_{0,j-1}-(j-1)d^{-1}F_{0,j-2},\\
 F_{ij}&=\ell_uF_{i-1,j}-(i-1)d^{-1}F_{i-2,j}
                      +jr d^{-1}F_{i-1,j-1}.
\end{align*}
Missing negative-index terms are zero. Indices at most $3$ in each variable
suffice. Define the Bell polynomials $\mathcal B_{a,i}$ by
\[
 \frac{d^a}{dx^a}g(P(x))
 =\sum_i\mathcal B_{a,i}(x)g^{(i)}(P(x)),\qquad
 \mathcal B_{a+1,i}=\mathcal B'_{a,i}+P'\mathcal B_{a,i-1}.
\]
For $e=0,1$, the polynomial prefactor $R_e$ is
\begin{equation}\label{eq:Re}
 R_e=\sum_{p,a}\kappa_{pa}
 \sum_{i,j}\mathcal B_{a+e,i}(x)\mathcal B_{a+e,j}(y)
 \begin{cases}
 F_{i+p-1,j+p-1},&p\ge1,\\
 F_{i-1,j-1},&p=0,
 \end{cases}
\end{equation}
where $i,j\ge1$ in the second case. Equation~\eqref{eq:price} implies
\[
 \partial_x^e\partial_y^e\Phi_3
 =d^{-1/2}R_e\exp\!\left(-\frac{u^2-2ruv+v^2}{2d}\right).
\]
Thus the spatial Lebesgue integrand in~\eqref{eq:B3} is exactly
\begin{equation}\label{eq:integrand}
 \frac{|R_0|^2+a|R_1|^2}{2\pi|d|}
 \exp\!\left(-A(u^2+v^2)+2Buv-\frac{x^2+y^2}{2}\right),
 \quad A=\Re(d^{-1}),\quad B=\Re(r/d).
\end{equation}
The code forms the combined prefactors before taking absolute values.
For interval evaluation, the quadratic form is kept as
\[
 A(u^2+v^2)-2Buv
 =\tfrac12(A-B)(u+v)^2+\tfrac12(A+B)(u-v)^2.
\]
Each coefficient has true value at least $1/2$ by~\eqref{eq:damping}.
Lower bounds on the two squares therefore give a valid upper bound on the
exponential, even when interval dependency would otherwise make a direct
quadratic evaluation too wide.

Simultaneous reflection of $x,y$ leaves~\eqref{eq:integrand} unchanged.
The computation partitions
\[
 [0,8]\times[-8,8]\times[0,1],\qquad \theta=\pi s,
\]
and multiplies spatial volumes by $2$. Integration in $s$ supplies the
angular mean. The initial $8\cdot16\cdot64$
boxes are repeatedly bisected. Floating-point priorities select which box
and which coordinate to split, but each accepted box is evaluated with
96-bit ball arithmetic. Endpoint containment and the dyadic partition are
checked independently after the run.

After $1\,500\,000$ leaf boxes, their weighted upper endpoints sum to
\begin{equation}\label{eq:finite}
 I_{\rm finite}<224.338413928641<225.
\end{equation}
The independent partition check establishes disjoint interiors and complete
coverage. It recursively reconstructs
the binary splits inside every initial box, checks the exact dyadic volume,
and re-sums all recorded bounds. The maximum subdivision depth is $16$ and
the largest endpoint denominator is $1024$.

\subsection{The spatial exterior}
Write $P(x)=\sum_{j=0}^{11}p_jx^j$ in the monomial basis. Interval arithmetic
proves
\[
 c_8:=p_{11}-\sum_{j<11}|p_j|8^{j-11}
 >2.7979091434283941\cdot10^{-8}.
\]
Hence $|P(x)|\ge c_8|x|^{11}$ for $|x|\ge8$, and
$c_8 8^{11}>240.3385653640865$.
Let $\delta=1-R^2>4.75916\cdot10^{-5}$. Expand $R_e$ using
\eqref{eq:Re}. Its degree in each spatial variable is at most $m=106$.
Replace each coefficient by an absolute upper bound, using
$|r|\le1$, $|d|\ge\delta$ and
$|q_j|\le\sum_n n^j|c_n|$. This gives explicit constants $C_0,C_1$ such that
\[
 |R_e(x,y)|\le C_e\max(1,|x|)^m\max(1,|y|)^m.
\]
The source constructs these constants by integer polynomial recurrences;
conservatively, $C_0<10^{16}$ and $C_1<10^{23}$.
By~\eqref{eq:damping}, the squared density contributes at most
$\delta^{-1}e^{-(P(x)^2+P(y)^2)/2}$.
On the union of the two spatial exteriors, extract
$\exp(-(c_8 8^{11})^2/4)$ and bound the remaining exponential by $1$.
With $M=1+211!!$ we therefore have, uniformly in angle,
\begin{equation}\label{eq:exterior}
 I_{\rm exterior}
 \le\delta^{-1}(C_0^2+4C_1^2)M^2
       \exp\!\left(-\frac{(c_8 8^{11})^2}{4}\right)
 <10^{-5000}.
\end{equation}
This estimate is made with weight $4$ and is also valid for $a=1/1000$.
The extracted pointwise bound holds throughout the exterior union,
which is then bounded by integration over all
of $\R^2$. The packaged independent prefactor recurrence gives a logarithmic
bound smaller than $-5819$; we use the weaker~\eqref{eq:exterior}.

\subsection{Closing the inequality}
The certified trace constant satisfies $C_{1/1000}<50.173$.
Equations~\eqref{eq:B3}, \eqref{eq:finite}, and~\eqref{eq:exterior} give
$B_3<106.093<107$, proving Lemma~\ref{lem:numeric}.
At $N=301$, the entire odd tail is bounded by
\[
 \frac{107}{\sqrt{10\cdot301^5}}<0.000021527.
\]
Also $\gamma<0.881825817$. Consequently
\begin{align*}
 \gamma+\sum_{n\ge3}|b_n|
 &<0.881825817+0.000000030+0.000021527\\
 &=0.881847374<0.881850816<b_1.
\end{align*}
The conservative rational margin is $0.000003442$.
Proposition~\ref{prop:transfer} now proves Theorem~\ref{thm:main}.

\enlargethispage{4\baselineskip}
\section{Reproducibility}\label{sec:reproduce}
The ancillary files include the exact parameter JSON, the complete
verification source, a dependency specification, and reproduction instructions.
From their containing directory, run
\begin{verbatim}
python -m pip install -r requirements.txt
python verify.py --output verification-output --workers 4
\end{verbatim}
Choose a new output directory. The verifier runs seven stages:
all Hermite integrals; scalar assembly; exterior bound; complete interval
partition; independent partition and arithmetic check; alternate scalar
assembly with additional integral checks; and the final rational comparison.
Success produces \texttt{certificate.json} and \texttt{reconstruction.json}.
Run Python with assertions enabled.

The accepted computation uses the exact input decimals and interval
arithmetic throughout the enclosure path. The final partition passes an
independent coverage check.
The final inequality has explicit positive slack after all truncation and
integration errors. The deposited code uses a standalone density-prefactor
recurrence derived from Price's identity.

The verification consists of full numerical reconstruction, a second scalar
assembly, independently reconstructed low- and high-index integrals, coverage
checks, and isolated AI-agent scrutiny of the analytic proof and software.
The second scalar implementation checks every head coefficient, and selected
integrals are reconstructed directly on a larger integration interval.

\clearpage
\appendix
\section{Exact parameters}\label{app:parameters}
Every entry below is an exact terminating decimal (including scientific
notation), interpreted as a rational number. The Hermite basis is fixed by
$h_0=1$, $h_1=x$, and
$h_{j+1}=(xh_j-\sqrt j\,h_{j-1})/\sqrt{j+1}$.

\begin{longtable}{r r}
\caption{Threshold coefficients in $P=\sum\alpha_jh_j$.}\\
\toprule
$j$&$\alpha_j$\\\midrule
1&$0.0033816437099488204$\\
3&$0.1158676765085016$\\
5&$0.0095104711257198767$\\
7&$-0.0080278072994477648$\\
9&$0.0058210183960585591$\\
11&$0.012821610768478974$\\
\bottomrule
\end{longtable}

\begin{longtable}{r r}
\caption{Correlation coefficients in $\rho(t)=\sum c_jt^j$.}\label{tab:rho}\\
\toprule $j$&$c_j$\\\midrule\endfirsthead
\toprule $j$&$c_j$\\\midrule\endhead
\bottomrule\endfoot
1&$0.89128717292541904$\\
3&$-0.10638551869595701$\\
5&$0.0022804376833522166$\\
7&$-3.7814263579902146\cdot10^{-8}$\\
9&$-1.0300867559781962\cdot10^{-5}$\\
11&$5.3495808526173962\cdot10^{-10}$\\
13&$8.3881520203661931\cdot10^{-6}$\\
15&$-1.2307543439537263\cdot10^{-6}$\\
17&$5.3773250452303132\cdot10^{-7}$\\
19&$1.6847785460476277\cdot10^{-6}$\\
21&$3.7811663671652489\cdot10^{-7}$\\
23&$3.2246923243582631\cdot10^{-8}$\\
25&$1.8981462348684652\cdot10^{-7}$\\
27&$6.4099421934521302\cdot10^{-8}$\\
29&$-5.1544356066976797\cdot10^{-8}$\\
31&$6.9279841157102393\cdot10^{-9}$\\
33&$5.553232634844159\cdot10^{-8}$\\
35&$1.3619423272928659\cdot10^{-8}$\\
37&$-3.1796367719425194\cdot10^{-8}$\\
39&$-2.2737533175642567\cdot10^{-8}$\\
41&$8.4604728136655195\cdot10^{-9}$\\
43&$1.7763396879945166\cdot10^{-8}$\\
45&$4.1161558338437501\cdot10^{-9}$\\
47&$-8.7507870739030589\cdot10^{-9}$\\
49&$-7.9288491194999501\cdot10^{-9}$\\
51&$4.9151067348141944\cdot10^{-10}$\\
\end{longtable}

\end{document}